\newcommand{\R}{{\cal{R}}}
\renewcommand{\P}{{\cal{P}}}
\newcommand{\eps}{{\varepsilon}}
\newcommand{\pmX}{{X^{\pm 1}}}
  \gdef\Set#1{\left\{\:{\mathcode`\|"8000\let|\SetVert #1}\:\right\}}}
  \gdef\Pres#1{\left\langle\:{\mathcode`\|"8000\let|\SetVert #1}\:\right\rangle}}
\def\SetVert{\egroup\;\middle|\;\bgroup}
\newtheorem{theorem}{Theorem}
\newtheorem{remark}[theorem]{Remark}
\newtheorem{observation}[theorem]{Observation}
\theoremstyle{definition}
\newtheorem{definition}[theorem]{Definition}
\title{A note on the structure of $V(6)$ maps}
\author{Uri Weiss \\ uriw@tx.technion.ac.il}
\begin{document}

\maketitle

\begin{abstract}
The purpose of this note is make Theorem 13 in \cite{Wei07} more accessible. Restatements of the theorem already appeared in few of the authors' succeeding works but with no details. We wish in this note to give the necessary details to these restatements.
\end{abstract}

\section{Introduction}

Theorem 13 in \cite{Wei07} is the following theorem:

\begin{theorem}[Cut corners exist in thick equality diagrams] \label{thm:thickDiag}
Assume that $w$ and $v$ are $(a,b)$-equal such that the equality diagram of $aw$ and $vb$ is not thin. Then, either $w$ or $v$ have a cut corner.
\end{theorem}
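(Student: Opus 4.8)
The plan for Theorem~\ref{thm:thickDiag} is to analyse the equality diagram by a combinatorial curvature argument, in the style of the classical structure theorems (Greendlinger-type lemmas) for small-cancellation maps. Write $D$ for the equality diagram of $aw$ and $vb$; after cancelling reducible pairs I may assume $D$ is reduced, and I may also reduce to the case where $\partial D$ is a simple closed path -- if not, cut $D$ along a cut vertex or a separating edge and argue on the pieces. Since $w$ and $v$ are $(a,b)$-equal, the boundary cycle of $D$ reads the closed word $a\,w\,b^{-1}\,v^{-1}$, so $\partial D$ is the concatenation of an $aw$-side and a $vb$-side, of which the $a$- and $b$-parts are single letters. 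Being an equality diagram, $D$ is a $V(6)$ map, so I may fix the angle assignment of \cite{Wei07}, under which every interior vertex and every interior face is non-positively curved; combinatorial Gauss--Bonnet then gives $\sum_x \kappa(x) = 2\pi\,\chi(D) = 2\pi$ over all vertices and all faces of the disc, so that all of the positive curvature is carried by the boundary layer -- the boundary vertices and boundary faces.

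Now suppose, for contradiction, that neither $w$ nor $v$ has a cut corner. The key sub-claim is that then every boundary face is non-positively curved. Indeed, by the $V(6)$ curvature formula a boundary face can be positively curved only if it is a cut corner, that is, only if its interior degree is below the $V(6)$ threshold, so that all but a bounded piece of its boundary lies on $\partial D$; but since the $a$- and $b$-sides are single letters, that long boundary arc must lie, up to one edge, on the $w$-side or on the $v$-side, making the face a cut corner of $w$ or of $v$ -- which we have excluded. Thus the positive curvature in the Gauss--Bonnet sum is confined to the boundary vertices. A now-standard layering argument, in which one peels off successive boundary layers and at each stage uses the curvature identity together with ``all faces non-positively curved'' to see that the layer just removed was a single strand of $2$-cells, then forces $D$ to be a ladder of $2$-cells joining the $aw$-side to the $vb$-side. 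That is precisely the statement that $D$ is thin, contradicting the hypothesis; hence a cut corner of $w$ or of $v$ exists.

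The step I expect to be the real obstacle is the boundary-layer analysis -- it is genuinely a case analysis rather than a one-line estimate. One must classify boundary faces by their number of boundary edges and their interior degree, and verify, with the exact constant coming from $V(6)$, that a boundary face which is not a cut corner is non-positively curved; and -- the fiddliest point -- one must handle the interaction of the lone letters $a,b$ with the notion of a cut corner, checking that a face carrying the single $a$-edge together with a long stretch of the $w$-side still counts as a cut corner of $w$, and ruling out a positively curved face whose only boundary edge is the $a$- or $b$-edge. The layering argument has its own small print: one must check that discarding a boundary layer preserves both the $V(6)$ condition and the absence of cut corners, and that the base cases of the induction are already thin. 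Once this boundary bookkeeping is in place, the identity $\sum_x\kappa(x)=2\pi$ closes the argument, exactly as sketched.
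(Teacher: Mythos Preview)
The present note does not contain a proof of Theorem~\ref{thm:thickDiag}: its stated purpose is expository, restating Theorem~13 of \cite{Wei07} and unpacking the surrounding definitions. The proof itself is in \cite{Wei07}, Section~4 (pages 804--819), and the paper explicitly declines to reproduce it (``which is too long to reproduce here''). There is therefore no proof in this paper against which to compare your proposal.

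That said, a couple of remarks on your sketch may still be useful. The overall architecture --- combinatorial Gauss--Bonnet plus a boundary-face analysis --- is the standard route and is presumably what \cite{Wei07} does. But two of your steps are more loaded than your write-up suggests. First, the claimed equivalence ``positively curved boundary face $\Leftrightarrow$ cut corner'' must be matched against Definition~\ref{def:cutConers} exactly: types T2--T4 are not mere degree thresholds on a single region but configurations involving a valence-three boundary vertex and, for T3 and T4, a specific \emph{adjacent} boundary region $E$ with constraints on its size or outer boundary. A single-face curvature count does not see $E$, so your ``only if'' direction needs a two-region argument, not a one-region one. Second, the passage from ``all positive curvature is at boundary vertices'' to ``the map is a ladder'' is the genuine content of the theorem; it is not a standard peeling lemma, and the fifteen pages of \cite{Wei07} are presumably largely devoted to it. Your outline is a plausible plan, but essentially all of the work resides in precisely the two places you already flag as obstacles.
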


The theorem essentially deals with the structure of special $V(6)$ diagrams. We aim to give all the necessary details and also to give a full and complete statement of the theorem so it can be used readily. It is assumed that the reader is familiar with the subject combinatorial group theory and has some knowledge of van Kampen diagrams (although, an effort is made to give some basic details and some references).

A note on the presentation: we will usually denote words (over some alphabet) with uppercase letters, $W$, $U$, and $V$. However. in \cite{Wei07} words are denoted with lowercase letters and we keep the original form in citations.

The rest of this note is organized as follows. In Section \ref{sec:preliminaries} we give a brief overview of the subject of van Kampen maps and diagrams and we give the definition of $V(6)$ maps. In Section \ref{sec:restatement} we will restate Theorem \ref{thm:thickDiag} with all the necessary details (including the meaning of ``$(a,b)$-equal'' and ``cut corner''). In Section \ref{sec:newStatment} we give the version of the theorem which only deals with maps. Section \ref{sec:consequences}, the last section, contains few consequences of the theorem.

\section{Preliminaries} \label{sec:preliminaries}

A \emph{map} is a finite planar connected and simply connected 2-complex (see \cite[Chapter V]{LS77}). We name the $0$-cells, $1$-cells, and $2$-cells by \emph{vertices}, \emph{edges}, and \emph{regions}, respectively. Vertices of valence one or two are allowed. Each edge has an orientation, i.e., a specific choice of initial and terminal vertices. Given an edge $e$ we denote by $i(e)$ the initial vertex of $e$ and by $t(e)$ the terminal vertex of $e$. If $e$ is an oriented edge then $e^{-1}$ will denote the same edge but with the reverse orientation. A \emph{path} is a series of (oriented) edges $e_1,e_2,\ldots,e_n$ such that $t(e_j) = i(e_{j+1})$ for $1\leq j < n$. The length of a path $\rho$ (i.e., the number of edges along $\rho$) is denoted by $|\rho|$. Paths of length zero are allowed; these paths consist of a single vertex. If $\rho$ is the path $e_1 \cdots e_n$ then we denote by $\rho^{-1}$ the path $e_n^{-1} \cdots e_1^{-1}$. If $\rho$ is a path that decomposes as $\rho=\rho_1\rho_2$ then $\rho_1$ is a \emph{prefix} of $\rho$ and $\rho_2$ is a \emph{suffix} of $\rho$. The term \emph{neighbors}, when referred to two regions of a map, means that the intersection of the regions' boundaries contains an edge; specifically, if the intersection contains only vertices, or is empty, then the two regions are not neighbors. \emph{Boundary edges} are edges in the boundary of the map. \emph{Boundary regions} are regions with outer boundary, i.e., the intersection of their boundary and the map's boundary contains at least one edge. The outer boundary of of a boundary region $D$ in a map $M$ is simply $\partial D \cap \partial M$; the inner boundary of $D$ is $\partial D \setminus \left(\partial D \cap \partial M\right)$. \emph{Inner regions} are regions which are not boundary regions. \emph{Proper boundary regions} are boundary regions which have the property that removing their interior and all their boundary edges keeps the map connected. If $D$ is a proper boundary region of $M$ then $\partial D \cap \partial M$ contains only one connected component which contain edges. A \emph{boundary path} is a path in the boundary of the map.

Let $M$ be a map with boundary cycle $\mu\sigma^{-1}$. We say that $M$ is a \emph{$(\mu,\sigma)$-thin} if every region $D$ of $M$ has at most two neighbors and both $\partial D\cap \mu$ and $\partial D\cap \sigma$ are non-empty. See Figure \ref{fig:thinMap} for an illustration of such a map.

\begin{figure}[ht]
\centering
\includegraphics[totalheight=0.18\textheight]{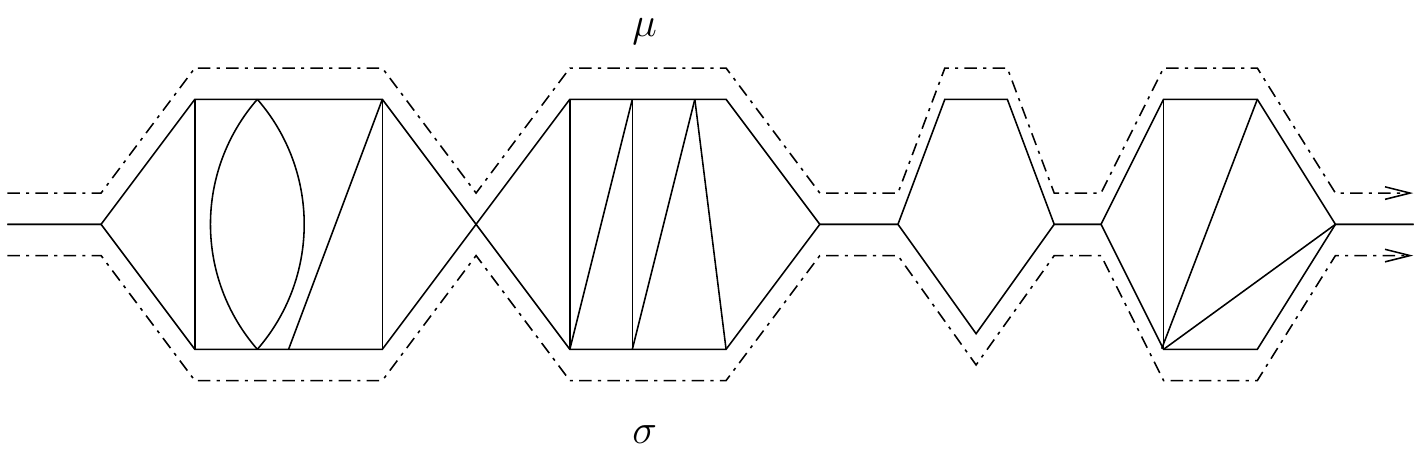}
\caption{Thin map} \label{fig:thinMap}
\end{figure}

\begin{definition}[$V(6)$ diagram] \label{def:v6Diag}
A map $M$ is a called a \emph{$V(6)$ map} if the following holds: suppose $D$ is an \emph{inner region} then:
\begin{enumerate}
 \item $D$ has at least four neighbors.
 \item If the boundary of $D$ contains a vertex of
 valence three then $D$ has at least six neighbors.
\end{enumerate}
\end{definition}

Given a finite presentation $\P=\Pres{X|\R}$, an \emph{$\R$-diagram} (or simply \emph{diagram} if the presentation is known from the context) is a map---called the \emph{underling map} of the diagram---where its edges are labelled by elements of $(\pmX)^*$ and the boundary of every region is labelled by elements of the symmetric closure of $\R$. Suppose that a group $G$ is presentated by $\P=\Pres{X|\R}$. van Kampen theorem \cite[Chapter V]{LS77} states that a word $W$ over $\pmX$ presents the identity of $G$, if and only if there is an $\R$-diagram with a boundary cycle labelled by $W$. A diagram is a $V(6)$ diagram if its underlying map is a $V(6)$ map. 

\section{Summary of original statement} \label{sec:restatement}

Theorem 13 in \cite{Wei07} appears in Section 3 which spans over pages 801-803. The section assumes that there is a fixed $V'(6)$ presentation $\P=\Pres{X|\R}$, where $\R$ is symmetrically closed and all its elements are freely reduced. The definition of $V'(6)$ presentation is given in page 798: 

\begin{definition}[$V'(6)$ presentations]
Let $\P=\Pres{X|\R}$ be a finite presentation of group $G$, where $\R$ is symmetrically closed and all its elements are freely reduced. A piece is a non-trivial word $U$, such that there are two different relators $R_1$ and $R_2$ in $\R$ with $U$ as their prefix. We say that $\P$ is a $V(6)$ presentation if for every relator $R\in\R$, one of the
following holds:
\begin{enumerate}
    \item Every decomposition of $R$ into pieces contains at least four pieces and if $R',R''\in\R$ then one of the three words $RR'$,$R'R''$ or $R''R$ is freely reduced.
    \item Every decomposition of $R$ into pieces contains at least six pieces.
\end{enumerate}
Presentation which are $V(6)$ presentation and every piece is of length one will be denoted by $V'(6)$. 
\end{definition}

There are several definition that are used in the statement of the theorem; we give them next for completeness. Two words, $W$ and $V$, are said to be ``$(a,b)$-equal'' (page 798) if $aW$ and $Vb$ present the same element in the group where $a$ and $b$ are elements of $\pmX\cup\Set{\eps}$ ($\eps$ is the empty word). Equality diagram for two words, $W$ and $V$, is a diagram $M$ which has a boundary cycle that is labelled by $WV^{-1}$ (page 799). A thin equality diagram for two words, $W$ and $V$, is an equality diagram which is $(\mu,\sigma)$-thin where $\mu$ is labelled by $W$ and $\sigma$ is labelled by $V$ (page 799). 

Next, we clarify the meaning of ``$w$ or $v$ have a cut corner'' in the statement of the theorem. In \cite{Wei07}, if a path $\mu$ is labelled by a word $W$ then letter $W$ may present both the path and the word (page 799). Thus, in the phrase above we have two paths, $\mu$ and $\sigma$, labelled by $w$ and $v$, respectively, which contain a cut corner. The definition of a path containing a cut corner follows (based on the definition which appear in page 802). For sake of brevity and simplicity, we give a version of the definition which differ from the original definition. See the note after the definition for the list of changes.

\begin{definition}[Cut Corners] \label{def:cutConers}
Let $M$ be a diagram and let $D$ be a \emph{proper} boundary region. Let $\mu=e_1 e_2 \cdots e_n$ be a boundary path of $M$, where this path contains all of the outer boundary of $D$ which we denote by $\mu_D$. Assume that $\mu_D$ contains the edges $e_\ell e_{\ell+1} \cdots e_{\ell+r}$, $1 \leq \ell \leq \ell+r \leq n$, and assume that the inner boundary of $D$, $\delta_D$, has $s$ edges. The edges $e_\ell$ and $e_{\ell-1}$ intersect in a vertex which we denote by $i(\mu_D)$. We say that \emph{$D$ is a cut corner for $\mu$}, if one of the following conditions hold:
\begin{enumerate}
    \item[T1.] $s<r$.
    \item[T2.] $s=r=2$, $\ell>1$ and $i(\mu_D)$ is of valence three.
    \item[T3.] $s=r=3$, $\ell>1$, $i(\mu_D)$ is of valence three and $e_{\ell-1}$ is on the boundary of an adjacent boundary region $E$ with at most five edges.
    \item[T4.] $s=r=3$, $\ell>2$, $i(\mu_D)$ is of valence three and $e_{\ell-2} e_{\ell-1}$ is on the boundary of an adjacent boundary region $E$.
\end{enumerate}
The phrases ``$\mu$ contains a cut corner $D$'' and ``$D$ is a cut corner for $\mu$'' will be used interchangeably.
\end{definition}

\begin{figure}[ht]
\centering
\includegraphics[totalheight=0.25\textheight]{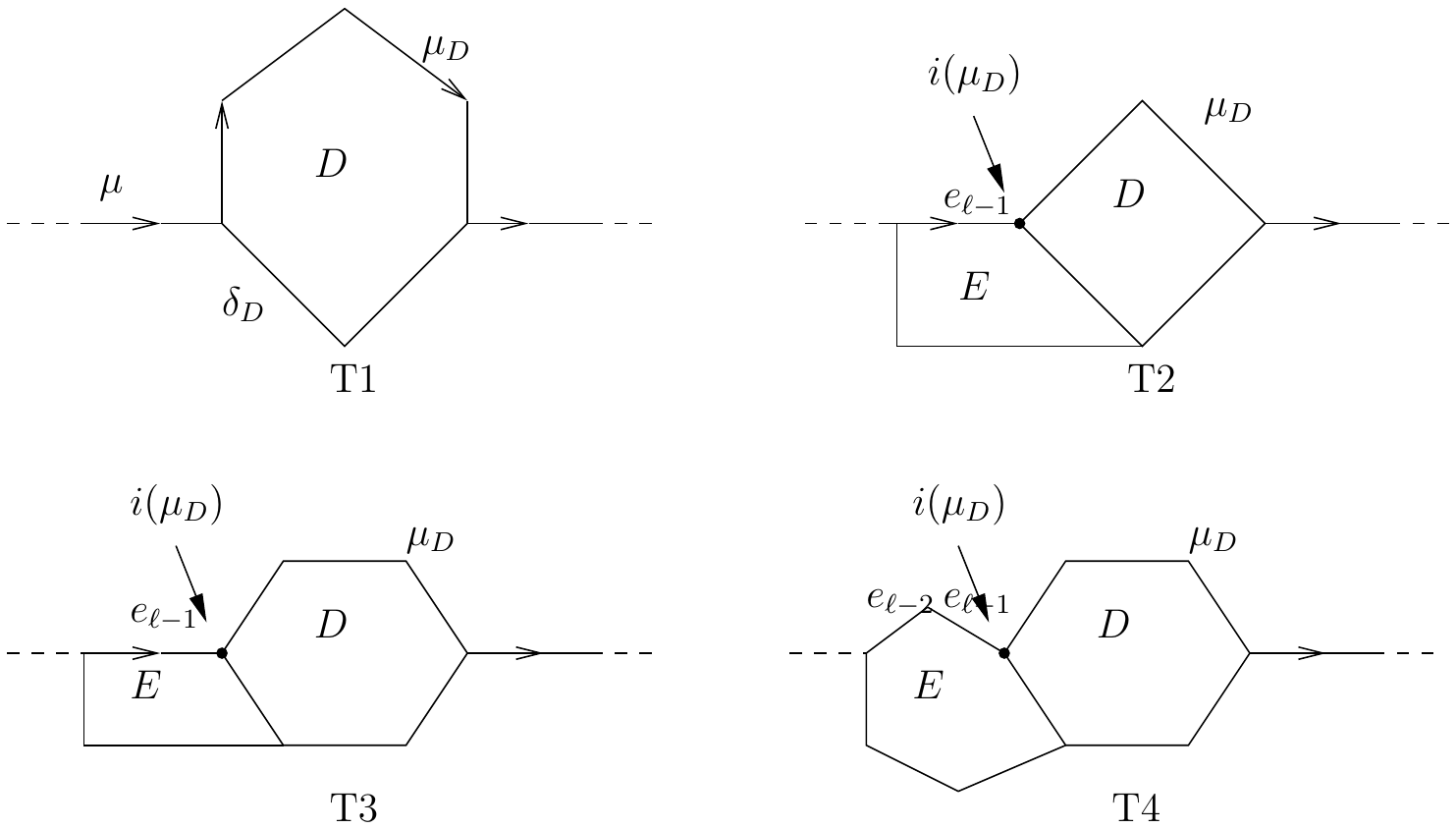}
\caption{Cut Corner} \label{fig:CC}
\end{figure}

\begin{remark}
As written above, the definition of cut corners is different from the original definition in \cite{Wei07}. We list the changes, which are only syntactic in nature, as can be easily verified.
\begin{enumerate}
 \item We avoided using the same name for paths and labells. This is done by removing the unnecessary reference to the labels. The reader can check that the actual labels of the paths are not important to the definition. 
 \item Some of the names of the object that appear in the definition where changed to reflect the terminology of this note.
 \item The region $D$ is assumed to be a \emph{proper} boundary region. In the original version this was not explicitly assumed although it follows from the assumptions of the definition.
 \item We removed the equality ``$s=n-r$'' which appeared in the original statement (this is a typo in the original version).
\end{enumerate}
\end{remark}

We end this section with a restatement of the main theorem and some observations. In the version below we tried to use as few as possible definitions.

\begin{theorem} \label{thm:diagVer}
Let $\P=\Pres{X|\R}$ be a $V'(6)$ presentation of a group $G$, let $W$ and $V$ be words over $\pmX$, and let $a$ and $b$ be elements of $\pmX\cup\Set{\eps}$. Assume that $aW$ and $Vb$ present the same element in $G$ and let $M$ by a diagram with boundary cycle $\xi\mu\tau^{-1}\sigma^{-1}$ such that $\xi$ is labelled by $a$, $\mu$ is labelled by $W$, $\tau$ is labelled by $b$, and $\sigma$ is labelled by $V$. Assume that the diagram $M$ is \emph{not} $(\xi\mu,\sigma\tau)$-thin. Then, either the path $\mu$ or the path $\sigma$ contains a cut corner.
\end{theorem}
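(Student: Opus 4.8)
The plan is to forget the labels and prove the equivalent statement about $V(6)$ maps: if $M$ is a $V(6)$ map (Definition \ref{def:v6Diag}) with boundary cycle $\xi\mu\tau^{-1}\sigma^{-1}$, where $\xi$ and $\tau$ have length at most one, and $M$ is not $(\xi\mu,\sigma\tau)$-thin, then one of the arcs $\mu,\sigma$ contains a cut corner in the sense of Definition \ref{def:cutConers}. The group-theoretic hypothesis that $aW$ and $Vb$ are equal in $G$ enters only to guarantee, via van Kampen's theorem, that such an $M$ exists and is a genuine disc. I would argue by induction on the number of regions of $M$. The degenerate cases (no region, or a single region) are always $(\xi\mu,\sigma\tau)$-thin when $\mu,\sigma$ are non-empty, so there is nothing to prove. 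For the inductive step I would first reduce to the case that $M$ is reduced and free of pendant edges: removing a cancelling pair of regions or absorbing a pendant edge yields a smaller $V(6)$ diagram with the same boundary label (hence still an equality diagram for suitable words and the same $a,b$), and a non-reduced $M$ either already exhibits a cut corner coming directly from a cancelling pair meeting the boundary, or can be reduced without destroying the non-thin hypothesis.

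On a reduced $M$ the engine is a combinatorial Gauss--Bonnet (discharging) argument. Assign to each corner of a region at a vertex of valence $d$ the angle $2\pi/d$, so that the curvature of a region $D$ with $e$ corners, at vertices of valences $d_1,\dots,d_e$, is $\kappa(D)=2\pi-\sum_{i=1}^{e}(\pi-2\pi/d_i)=2\pi-e\pi+2\pi\sum_{i=1}^{e}1/d_i$, an interior vertex has curvature $0$, and a boundary vertex of valence $d$ has curvature $(2-d)\pi/d\le0$. Since $M$ is a disc, the curvatures sum to $2\pi$. The two clauses of Definition \ref{def:v6Diag} are exactly what forces $\kappa(D)\le0$ for every interior region $D$: if $D$ has at least four neighbours and no valence-three vertex on its boundary then $e\ge4$ and every $d_i\ge4$, so $\kappa(D)\le2\pi-4\pi+2\pi\cdot4\cdot(1/4)=0$; if $D$ has at least six neighbours then $e\ge6$ and $\kappa(D)\le2\pi-6\pi+2\pi\cdot6\cdot(1/3)=0$. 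Hence all positive curvature is carried by boundary regions (pendant edges, which would give valence-one boundary vertices of curvature $+\pi$, have been removed), so the boundary regions together carry curvature at least $2\pi$, and there is a boundary region $D$ with $\kappa(D)>0$.

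It remains to extract a cut corner on $\mu$ or $\sigma$ from this positive curvature. If $D$ is a \emph{proper} boundary region whose outer boundary $\mu_D$ lies inside $\mu$ or inside $\sigma$, I would expand $\kappa(D)>0$ with $r=|\mu_D|$ and $s=|\delta_D|$, using that the interior vertices of $\delta_D$ are interior vertices of $M$ of valence at least three and that the two transition vertices of $D$ have valence at least three; examining the extremal configurations vertex by vertex then forces either $s<r$ (case T1) or $s=r\in\{2,3\}$ with $i(\mu_D)$ of valence three, the side conditions on the adjacent region $E$ in cases T3 and T4 being precisely what the analysis of the corner at $e_{\ell-1}$ produces. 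The remaining possibilities are that every positive-curvature boundary region is non-proper, or that its outer boundary straddles one of the (at most four) corners of $\partial M$ or lies inside $\xi$ or $\tau$; a region of the latter sort has $r\le1$ and hence is never a cut corner, and one checks that such regions cannot carry $2\pi$ of curvature in total, while in the former situation $M$ contains a genuinely thin sub-strip abutting the non-thinness, which is disposed of by peeling off a proper boundary region that is not a cut corner and invoking the induction hypothesis, after verifying that a cut corner produced in the smaller diagram survives in $M$.

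The step I expect to be the main obstacle is the conversion in the third paragraph: isolating exactly the four cases T1--T4, with the delicate conditions on the neighbouring region $E$ in T3 and T4, from the single curvature inequality, while at the same time guaranteeing that the cut corner one obtains really is located on $\mu$ or on $\sigma$ rather than on the short arcs $\xi,\tau$ or straddling a corner of $\partial M$. The supporting bookkeeping --- non-proper boundary regions, boundary vertices of low valence, and the verification that the preliminary reductions preserve all the hypotheses of Theorem \ref{thm:diagVer} --- is routine but accounts for most of the length of a complete proof.
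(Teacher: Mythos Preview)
Your overall plan --- strip the labels and argue purely on the underlying map --- is exactly the move the note makes in Section~\ref{sec:newStatment}, so in spirit you are aligned with the paper. The note itself does not reproduce the proof (it points to the 15-page argument in \cite{Wei07}), so I cannot compare your curvature scheme to the original step by step; a Gauss--Bonnet/discharging set-up of the kind you describe is a standard and reasonable engine for this sort of small-cancellation statement.

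There is, however, a genuine gap in your reformulation. You propose to prove: ``if $M$ is a $V(6)$ map (Definition~\ref{def:v6Diag}) with boundary $\xi\mu\tau^{-1}\sigma^{-1}$, $|\xi|,|\tau|\le1$, and $M$ is not $(\xi\mu,\sigma\tau)$-thin, then $\mu$ or $\sigma$ contains a cut corner.'' But Definition~\ref{def:v6Diag} constrains \emph{inner} regions only; it says nothing about boundary regions and nothing about inner vertices of valence two. The note is explicit (Section~\ref{sec:newStatment}) that the correct map-level hypothesis is that $M$ be a \emph{proper} $V(6)$ map: no inner vertices of valence two, and every boundary region $D$ has $|\partial D|\ge4$, with $|\partial D|\ge6$ whenever $\partial D$ carries an inner vertex of valence three. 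These extra constraints come for free from the $V'(6)$ presentation in Theorem~\ref{thm:diagVer} (pieces have length one, edges are labelled by single generators), but they do \emph{not} follow from the bare $V(6)$ map condition. Your curvature argument uses only the inner-region bounds to get $\kappa(D)\le0$ for interior $D$, which is fine; the trouble is the extraction step. Without the lower bounds on $|\partial D|$ for boundary regions, a positively curved boundary region could be, say, a triangle or a bigon with $r\le s$, and then none of T1--T4 applies. Likewise, the case split ``$s=r=2$'' or ``$s=r=3$'' in T2--T4 is calibrated to regions with at least four (resp.\ six) boundary edges; those thresholds are precisely what the proper-$V(6)$ boundary conditions supply.

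So: keep your reduction-to-maps and your curvature framework, but replace ``$V(6)$ map'' by ``proper $V(6)$ map'' in your target statement (this is Theorem~\ref{thm:mapVer}), and in the extraction paragraph invoke the boundary-region edge bounds explicitly when you unpack $\kappa(D)>0$ into the four types. You already flag the T3/T4 analysis and the handling of regions straddling $\xi$ or $\tau$ as the delicate part; that is accurate, and it is where the length of the original proof lives.
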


\begin{observation}
We list few observations regarding the theorem which will be important later:
\begin{enumerate}
 \item A diagram over a $V'(6)$ presentation is a $V(6)$ diagram (page 802).
 \item Since all pieces of the presentation are of length one there are no inner vertices of valence two. 
 \item All edges of the diagram are labelled by a generator (this is assumed in the definition of a diagram in page 799). Consequently, the lengths of $\xi$ and $\tau$ are at most one.
\end{enumerate}
\end{observation}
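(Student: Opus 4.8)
The statement records three independent facts about an $\R$-diagram $M$ over the fixed $V'(6)$ presentation $\P=\Pres{X|\R}$, and the plan is to prove each directly from the definitions, the single recurring tool being that in a $V'(6)$ presentation every piece has length one. I would treat the three items separately, since they are logically unrelated: the first is the substantive translation of the presentation hypotheses into the map hypotheses of Definition \ref{def:v6Diag}, while the remaining two are short structural remarks.

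For the second and third items I would argue as follows. For the second, suppose toward a contradiction that $M$ has an inner vertex $x$ of valence two, with incident edges $e_1,e_2$. Since $x$ is interior and $M$ is planar, the path $e_1 e_2$ lies on the common boundary of the two regions it locally separates, hence inside a maximal common arc of those regions; in a \emph{reduced} $\R$-diagram the label of such a maximal arc is a common subword of two (cyclically rotated) relators, and is therefore a piece. But $e_1 e_2$ forces this piece to have length at least two, contradicting that all pieces of a $V'(6)$ presentation have length one; hence no inner vertex has valence two. For the third, I would invoke the labelling convention of \cite{Wei07} (page 799) that each edge of an $\R$-diagram carries a single, nonempty generator of $\pmX$: a boundary path whose label has word-length one then consists of exactly one edge, and a path labelled by $\eps$ is the empty path. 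Since $a,b\in\pmX\cup\Set{\eps}$ each have word-length at most one, the paths $\xi$ and $\tau$ labelled by $a$ and $b$ satisfy $\len{\xi}\leq 1$ and $\len{\tau}\leq 1$.

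The first item is the heart of the matter and is carried out on page 802 of \cite{Wei07}; the plan is to verify the two defining conditions of a $V(6)$ map against the two cases in the definition of a $V(6)$ presentation. Fix an inner region $D$ whose boundary is labelled by a relator $R$. Because every piece has length one, every maximal common arc on $\partial D$ is a single edge; consecutive boundary edges of $D$ therefore border \emph{distinct} regions, and (once the possibility that one region meets $D$ along two separate arcs is excluded) the neighbours of $D$ are in bijection with the edges of $\partial D$. Hence the number of neighbours of $D$ equals $\len{R}$, which is exactly the number of pieces in a decomposition of $R$ into length-one pieces. If $R$ falls under the first case of the definition this count is at least four, and if it falls under the second case it is at least six; in either case $D$ has at least four neighbours, which is the first condition of Definition \ref{def:v6Diag}. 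For the second condition I would relate an interior valence-three vertex on $\partial D$ to the freely-reduced-product clause attached to the first case (``if $R',R''\in\R$ then one of $RR'$, $R'R''$, $R''R$ is freely reduced''): three regions with relators $R,R',R''$ meet at such a vertex, and the aim is to show that this clause is incompatible with that configuration, so that a first-case region carries no interior valence-three vertex (making the condition vacuous for it) while a second-case region already has at least six neighbours.

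The main obstacle is precisely this last implication: converting the algebraic clause about free reducibility of the pairwise products into the geometric assertion that a first-case region has no interior valence-three vertex on its boundary. I expect this to require reading off, at a hypothetical valence-three vertex, the three corner labels of the meeting regions, verifying that the vertex would force all three products $RR'$, $R'R''$, $R''R$ to fail to be freely reduced, and extracting a contradiction with the clause; this is the verification performed in \cite{Wei07}. I would also need to discharge two secondary points that the sketch above suppresses: ruling out (or otherwise accounting for) the case in which two distinct edges of $\partial D$ bound the same region, which would lower the neighbour count below $\len{R}$, and fixing the reducedness normalisation of the diagram \cite[Chapter V]{LS77}, which is what guarantees that maximal-arc labels are genuinely pieces.
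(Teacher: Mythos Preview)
The paper does not prove this Observation at all: it simply records three facts, each accompanied by a page reference to \cite{Wei07} or a one-clause justification, and moves on. There is no argument to compare against; the ``proof'' in the paper is the bare statement plus the citations. Your proposal is therefore not a match for the paper's approach, because you attempt a self-contained verification where the paper deliberately defers to \cite{Wei07}.

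On the substance, your treatments of items 2 and 3 are fine and are essentially the implicit reasoning behind the paper's one-line remarks. Your plan for item 1, however, is incomplete in exactly the places you yourself flag. First, the claim that the neighbours of an inner region $D$ are in bijection with the edges of $\partial D$ is not automatic: a single region could meet $D$ along two non-adjacent edges, which would still be just one neighbour under the paper's definition, so the count of neighbours could be strictly smaller than $\len{R}$. You note this but do not discharge it. Second, the valence-three argument---deriving from the free-reducibility clause that a first-case region carries no interior vertex of valence three---is stated as an intention, not carried out; the actual derivation (reading the corner labels and obtaining cancellations in all three products) is the content of the passage on page~802 of \cite{Wei07} that the paper cites. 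Since the paper's own handling is simply to cite that passage, your proposal is more ambitious than required, and the gaps you leave open are precisely the work that \cite{Wei07} does and that this note does not reproduce.
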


\section{A new statement} \label{sec:newStatment}

In this section we shift the focus to maps (instead of diagrams). The proof of Theorem 13 in \cite{Wei07} is given in Section 4 (pages 804-819). As can be verified, the proof does not concern with the actual labels of the diagram. Thus, the proof deals only with the structure of special $V(6)$ diagrams under certain conditions. Our goal in this section is to make the assumption on the structure of the diagram explicit and finally restate the theorem as a theorem on maps.

Let $M$ be a diagram over a $V'(6)$ presentation for which the conditions of the theorem hold. As we observed, the map is a $V(6)$ map and has no inner vertices of valence two. The boundary regions have also a special structure. Since each edge is labelled by a generator and the presentation is a $V'(6)$ presentation it follows that one of following two conditions hold for a boundary region $D$ of $M$: (1) $\partial D$ contains at least four edges; (2) if $\partial D$ contains an inner vertex of valence three then $\partial D$ contains at least six edges. A $V(6)$ map with no inner vertices of valence two for which the above two conditions hold for every boundary regions is called \emph{proper $V(6)$ map}. It is clear that the underling map of the diagram $M$ in Theorem \ref{thm:diagVer} is a proper $V(6)$ map. One can verify by going over the proof (which is too long to reproduce here) that the only needed assumption for the theorem to hold is the assumption that the map is a proper $V(6)$ map. Thus, it follows that following theorem hold: 

\begin{theorem} \label{thm:mapVer}
Let $M$ be a proper $V(6)$ map with boundary $\xi\mu\tau^{-1}\sigma^{-1}$ such that:
\begin{enumerate}[(a) ]
 \item $|\xi|\leq1$ and $|\tau|\leq1$.
 \item $M$ is not $(\xi\mu,\sigma\tau)$-thin.
\end{enumerate}
Then, either the path $\mu$ or the path $\sigma$ contains a cut corner.
\end{theorem}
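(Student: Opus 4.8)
The plan is to argue by induction on the number of regions of $M$, peeling off a boundary region that does not already witness a cut corner and reducing to a strictly smaller proper $V(6)$ map. First I would handle the degenerate cases: if $M$ has no regions it is trivially $(\xi\mu,\sigma\tau)$-thin, and if $M$ has a single region the failure of thinness together with the defining inequalities of a proper $V(6)$ map (at least four edges on a boundary region, six if there is an inner valence-three vertex — but for a one-region map all vertices are on the boundary) forces that region to satisfy one of T1--T4 for $\mu$ or for $\sigma$. The real work is the inductive step, and it is organized around the following dichotomy for a proper boundary region $D$: either the outer boundary $\mu_D$ is long relative to the inner boundary $\delta_D$ (which is precisely T1, so we are done), or $s \ge r$, in which case $D$ has few outer edges and, because $M$ is a proper $V(6)$ map, $D$ has at most a controlled number of neighbors and at most one or two "interior contact" patterns.

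The combinatorial engine is a curvature/counting estimate in the spirit of the classical small-cancellation arguments: assign to each region a curvature based on its number of neighbors and the valences of its boundary vertices, so that inner regions have nonpositive curvature by the $V(6)$ condition (property (1) gives $\ge 4$ neighbors, property (2) upgrades to $\ge 6$ when a valence-three vertex is present), and conclude that the total positive curvature must concentrate on the boundary. Then I would show that a boundary region carrying enough positive curvature, together with the hypothesis that $M$ is not thin (so not every region has $\le 2$ neighbors with both $\partial D\cap\xi\mu$ and $\partial D\cap\sigma\tau$ nonempty), must either be a cut corner directly or allow us to excise a "thin strip" — a maximal chain of $2$-neighbor regions each meeting both $\mu$ and $\sigma$ — from one end of $M$; after excision the remaining map is again a proper $V(6)$ map with boundary of the same shape $\xi'\mu'\tau'^{-1}\sigma'^{-1}$, still non-thin (else the removed strip plus the thin remainder would make all of $M$ thin), and strictly smaller, so induction applies. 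Care is needed to check that excising the strip does not create inner valence-two vertices or violate the edge-count conditions on the new boundary regions; this is where the hypothesis $|\xi|\le 1$, $|\tau|\le 1$ is used, to control what happens at the two "corners" $\xi\mu$, $\mu\tau^{-1}$, etc.

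The subtle case analysis is in distinguishing T2, T3, T4 from one another: when $s=r=2$ or $s=r=3$ the region $D$ is not automatically a cut corner — one needs the extra data that $i(\mu_D)$ has valence three and that the edge(s) immediately preceding $\mu_D$ along the boundary path lie on a small adjacent boundary region $E$. I would obtain these by a local analysis at the vertex $i(\mu_D)$: since $M$ is a proper $V(6)$ map with no inner valence-two vertices, a valence-three inner-ish vertex adjacent to $D$ forces the neighboring region $E$ to share the required edges, and the bound on the number of edges of $E$ comes from the same curvature bookkeeping applied to $E$. The conditions $\ell>1$, $\ell>2$ in T2--T4 are exactly the statements that $D$ is not the very first region met along $\mu$, i.e. that its preceding boundary edges exist inside $\mu$ and not in $\xi$; this is again where $|\xi|\le 1$ enters.

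The main obstacle I anticipate is making the excision step clean: one must be sure that the "thin strip" one removes really is attached along a single arc (so that the complement stays simply connected and the boundary word keeps the form $\xi\mu\tau^{-1}\sigma^{-1}$), and that no boundary region of the smaller map fails the proper $V(6)$ edge-count — in particular a region that was inner in $M$ and becomes a boundary region after excision must be checked against conditions (1)--(2) for boundary regions, which can fail if it had exactly four neighbors all of which were accounted for internally. Handling that correctly likely requires choosing the strip maximally and removing it all at once rather than region by region, and possibly strengthening the induction hypothesis to carry along the statement that the cut corner found lies at a prescribed end of $\mu$ or $\sigma$. I expect this bookkeeping, rather than any single inequality, to be the delicate part; the curvature estimate itself is routine once the contributions of boundary vertices of valence one and two are tabulated.
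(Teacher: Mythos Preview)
Your proposal attempts a direct combinatorial proof, but the paper does not prove Theorem~\ref{thm:mapVer} in that way at all. The paper's argument is a one-line meta-observation: Theorem~\ref{thm:diagVer} is Theorem~13 of \cite{Wei07}, whose proof (pages 804--819 there, explicitly ``too long to reproduce here'') never actually uses the labels of the diagram, only the underlying map structure encoded in the notion of a proper $V(6)$ map; hence the same proof establishes Theorem~\ref{thm:mapVer}. There is no induction, no curvature count, and no excision carried out in this note --- the entire content is the identification of which hypotheses on the map the cited proof really needs.

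Your outline is therefore a genuinely different route: you are sketching what a self-contained proof might look like, whereas the paper deliberately avoids that and defers to \cite{Wei07}. As a sketch of a direct argument your plan is reasonable in spirit (curvature concentration on the boundary, then local analysis to extract one of T1--T4), and indeed the proof in \cite{Wei07} does involve an extended case analysis of boundary configurations. But several of the steps you describe are not yet arguments: you do not specify the curvature formula adapted to the $V(6)$ condition (which mixes a $C(4)$-type and a $C(6)$-type hypothesis depending on the presence of valence-three vertices), and the excision step has exactly the problem you flag --- an inner region with four neighbors can become a boundary region failing the proper $V(6)$ edge-count, so the class of maps is not obviously closed under removing a thin strip. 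The single-region base case is also not as simple as you suggest: a one-region map with boundary $\xi\mu\tau^{-1}\sigma^{-1}$ is automatically $(\xi\mu,\sigma\tau)$-thin, so it never arises under hypothesis (b). Given that the actual argument in \cite{Wei07} occupies fifteen pages, you should expect the bookkeeping you anticipate to be substantial rather than routine.
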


\section{Consequences} \label{sec:consequences}

In this section we state two private cases of Theorem \ref{thm:mapVer}. The full statement appeared in \cite{Wei10a, Wei10b}. In \cite{Wei10c} the following theorem appears:

\begin{theorem}
Let $M$ be a $C(7)$ map with boundary $\xi\mu\tau^{-1}\sigma^{-1}$ such that $|\xi|\leq1$ and $|\tau|\leq1$ and $M$ is not $(\xi\mu,\sigma\tau)$-thin. Then, there is a proper boundary region $D$ which its outer boundary is contained in the path $\mu$ or the path $\sigma$ and has at most three neighbors.
\end{theorem}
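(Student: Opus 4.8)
The plan is to deduce this from Theorem~\ref{thm:mapVer} by induction on the number of regions of $M$. First I would note that a $C(7)$ map is a proper $V(6)$ map: an interior region has at least seven, hence at least six, neighbours; after the standard removal of interior vertices of valence two there are none left; and a boundary region of a $C(7)$ map meets the remaining requirement of a proper $V(6)$ map. So Theorem~\ref{thm:mapVer} applies --- its hypotheses $|\xi|,|\tau|\le1$ and ``not thin'' are exactly the ones assumed here --- and produces a proper boundary region $D$ that is a cut corner for $\mu$ or for $\sigma$; say $\mu_D\subseteq\mu$. Thus $D$ is already a proper boundary region whose outer boundary lies in $\mu$, and the only thing left to settle is the number of its neighbours.

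Since every neighbour of $D$ shares an edge with $D$ along its inner boundary $\delta_D$, the region $D$ has at most $s:=|\delta_D|$ neighbours. If $D$ has at most three neighbours the proof is finished, so suppose it has at least four; then $s\ge4$, and inspecting the types T1--T4 of Definition~\ref{def:cutConers} only T1 allows $s\ge4$, so $s<r$, i.e.\ the outer boundary $\mu_D$ is strictly longer than $\delta_D$. I would then delete $D$: because $D$ is a proper boundary region, $M':=M\setminus D$ is again a connected, simply connected $C(7)$ map with no new interior vertices of valence two, with boundary $\xi\mu'\tau^{-1}\sigma^{-1}$ where $\mu'=\alpha\delta_D\beta$ for the decomposition $\mu=\alpha\mu_D\beta$; since $|\mu'|<|\mu|$ the map $M'$ has fewer regions and still satisfies $|\xi|,|\tau|\le1$.

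Now I would split on whether $M'$ is $(\xi\mu',\sigma\tau)$-thin. If it is, every region of $M'$ has at most two neighbours; among the (at least four) regions of $M'$ incident to $\delta_D$ --- the former neighbours of $D$ --- at most two can have an outer edge outside $\delta_D$ on the top arc and at most one can have an outer edge on $\tau$, so for some such region $E$ its outer boundary in $M$ is a non-empty subpath of $\sigma$; as $E$ acquires only $D$ as a new neighbour it has at most three neighbours in $M$, finishing this case. If $M'$ is not thin, the induction hypothesis gives a proper boundary region $E$ of $M'$ with outer boundary in $\mu'$ or in $\sigma$ and at most three neighbours in $M'$. If the outer boundary of $E$ avoids $\delta_D$, then $E$ has the same neighbours and the same (admissible) outer boundary in $M$ and we are done; otherwise it meets $\delta_D$ but, not being contained in $\delta_D$ (else $E$ would be an interior region of the $C(7)$ map $M$ with at most $3+1$ neighbours, impossible), it is the neighbour of $D$ at one end of $\delta_D$, keeps an outer edge in $\mu$, and acquires exactly one new neighbour, $D$ --- leaving one to check that it still has at most three neighbours in $M$.

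The step I expect to be the main obstacle is exactly this borderline case: a type~T1 cut corner $D$ with long inner boundary whose deletion forces the inductive witness $E$ to be an extreme neighbour of $D$, having three neighbours in $M'$ and a priori four in $M$. Handling it seems to require either a strengthened induction hypothesis that keeps the witness away from that position, or the observation that such an $E$ is itself a cut corner of $\mu$ in $M$ (or can be deleted in its turn), so that the induction still descends. As a cleaner, self-contained alternative one can bypass Theorem~\ref{thm:mapVer} altogether: a simply connected $C(7)$ map is non-positively curved, so by the structure theory of such maps it is a single region, a ladder --- which is precisely a thin map --- or has at least three boundary regions each with at most three neighbours; since $M$ is not thin (and hence not a single region) the last alternative holds, and since $|\xi|\le1$ and $|\tau|\le1$ at most two of these regions can straddle $\xi$ or $\tau$, so at least one of them has its outer boundary contained in $\mu$ or in $\sigma$ and is the region we seek.
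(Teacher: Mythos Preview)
Your inductive approach is considerably more involved than the paper's, and---as you yourself flag---it does not close: in the borderline case where the inductive witness $E$ is an extreme neighbour of the deleted region $D$, $E$ may have three neighbours in $M'$ and four in $M$, and nothing in your setup rules this out. The vague suggestions (strengthen the hypothesis, or observe $E$ is itself a cut corner) are not worked out, so as it stands the main line of argument has a genuine gap.

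The paper avoids induction entirely with a minimality trick that you missed. First preprocess: remove inner valence-two vertices and then \emph{add} valence-two vertices along $\mu$ and $\sigma$ so that every region---boundary regions included---has at least seven edges, adding the \emph{minimal} number of such vertices. The resulting map is a proper $C(7)$ map, hence a proper $V(6)$ map, and Theorem~\ref{thm:mapVer} produces a cut corner $D$. The type is forced to be T1, since T2 and T4 require $|\partial D|\le 6$ and T3 requires a neighbouring region with at most five edges, all impossible here. So $s<r$. Now if $s\ge 4$ then $r\ge 5$ and $|\partial D|=s+r\ge 9$; but then two of the valence-two vertices added to the outer boundary of $D$ could be removed while keeping $|\partial D|\ge 7$, contradicting minimality of the subdivision. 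Hence $s\le 3$ directly, and $D$ has at most three neighbours. The point is that the minimal subdivision arranges \emph{in advance} that a T1 cut corner cannot have $s\ge 4$, so no descent is needed.

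Your closing alternative via the ladder/structure theorem for $C(7)$ maps is a plausible self-contained route, but it bypasses Theorem~\ref{thm:mapVer} altogether, which is contrary to the paper's aim of exhibiting this result as a corollary of that theorem.
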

\begin{proof}[Proof (sketch).]
By removing inner vertices of valence two and adding vertices of valence two in $\mu$ or $\sigma$ we can assume that $M$ is a proper $C(7)$ diagram. Namely, every region $D$ has the property that $\partial D$ contains at least seven edges and there are no inner vertices of valence two. We make sure to add the minimal number of vertices to $\mu$ and $\sigma$ for this property to hold (or, possibly, remove vertices). $M$ is also a proper $V(6)$ map. Consequently, $\mu$ or $\sigma$ contains a cut corner $D$. Since this is a $C(7)$ map the region $D$ is a cut corner of type T1 and so it has more outer edges then inner edges. Let $s$ be the number of inner edges and $r$ the number of outer edges of $D$. So, $s < r$. We need to show that $s \leq 3$. If that is not the case then $s + r \geq 4 + 5 = 9$. This is not impossible since we can remove some vertices from the outer boundary of $D$ while keeping the map a proper $C(7)$ map. Hence, it follows that $s \leq 3$ as needed and $D$ has at most three neighbors.
\end{proof}

Next, another private case of Theorem \ref{thm:mapVer} which appeared in \cite{Wei10d}. This private case deals with proper $C(4)\&T(4)$ maps. A proper $C(4)\&T(4)$ map is a map with no inner vertices of valence less than four and for which each regions has at least four edges in its boundary. These maps are special type of proper $V(6)$ maps. We start with the definition of ``thick configuration'':

\begin{definition}[Thick configurations] \label{def:thickConf}
Let $M$ be a proper $C(4) \& T(4)$ map and let $\alpha$ be a path on the boundary of $M$. A \emph{thick configuration} in $\alpha$ is a sub-diagram $N$ of $M$ where one of the following holds:
\begin{enumerate}
	\item $N$ contains single region $D$ with $\partial D = \mu \sigma^{-1}$ such that $\mu = \partial D \cap \alpha$ and $|\mu| > |\sigma|$. See Figure \ref{fig:thickConf}(a).
	\item $N$ has connected interior and consists of two neighboring regions $D_1$ and $D_2$. The boundary of $D_2$ decomposes as $\partial D_2 = \mu\sigma^{-1}$ where $|\mu|=|\sigma|=2$, $\mu$ is a sub-path of $\alpha$, and $\sigma$ contains only inner edges. The boundary of $D_1$ contains an outer edge $e$ such that $e \mu$ is a sub-path of $\alpha$. See Figure \ref{fig:thickConf}(b).
\end{enumerate}
If there is a thick configuration along $\alpha$ then we say that \emph{$\alpha$ contains a thick configuration}.

\begin{figure}[ht]
\centering
\includegraphics[totalheight=0.20\textheight]{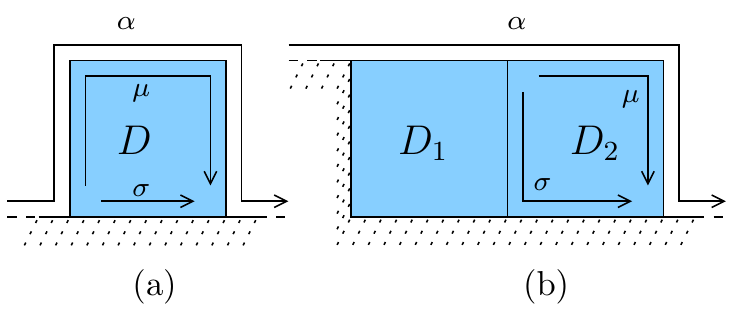}
\caption{Thick Configurations}\label{fig:thickConf}
\end{figure}

\end{definition}

The following theorem characterizes when a proper $C(4) \& T(4)$ diagram is thin (and as we said, it is a special case of Theorem \ref{thm:mapVer}).

\begin{theorem} \label{thm:noThickToThinDiag}
Let $M$ be a proper $C(4) \& T(4)$ map with boundary cycle $\sigma \alpha \tau^{-1} \beta^{-1}$ such that $|\sigma|\leq 1$ and $|\tau|\leq1$. If $\alpha$ and $\beta$ do not contain thick configurations then $M$ is $(\sigma\alpha,\beta\tau)$-thin.
\end{theorem}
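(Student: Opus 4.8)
The plan is to derive Theorem~\ref{thm:noThickToThinDiag} from Theorem~\ref{thm:mapVer} by contraposition: assume $M$ is not $(\sigma\alpha,\beta\tau)$-thin and show that $\alpha$ or $\beta$ contains a thick configuration. First I would check that the hypotheses of Theorem~\ref{thm:mapVer} are met. A proper $C(4)\&T(4)$ map has no inner vertices of valence less than four, hence none of valence two, and every region (in particular every boundary region) has at least four edges on its boundary; this is exactly a proper $V(6)$ map, so with the identification $\xi=\sigma$, $\tau=\tau$, $\mu=\alpha$, $\sigma_{\text{Thm}}=\beta$ and the bounds $|\sigma|\le1$, $|\tau|\le1$, Theorem~\ref{thm:mapVer} applies and yields a cut corner $D$ for $\alpha$ or for $\beta$. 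By symmetry it suffices to treat the case where $D$ is a cut corner for $\alpha$; let $s$ be the number of inner edges of $D$ and $r$ the number of its outer edges.

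Next I would rule out the cut corner types that cannot occur. Types T3 and T4 both require a vertex of valence three on the boundary of $D$, which is forbidden in a proper $C(4)\&T(4)$ map, so only T1 ($s<r$) and T2 ($s=r=2$ with $i(\mu_D)$ of valence three) are possible; but T2 also invokes a valence-three vertex, so in fact \emph{only T1 can occur}: we have $s<r$ and $s\ge2$ (the inner boundary is nonempty since $D$ is proper and $M$ is not thin, and it has no valence-two vertices), hence $r\ge3$. The remaining work is to convert the inequality $s<r$ into one of the two shapes in Definition~\ref{def:thickConf}. The easy subcase is $s=2$: then $\partial D=\mu\sigma^{-1}$ with $\mu=\partial D\cap\alpha$ the outer boundary (of length $r\ge3$) and $\sigma$ the inner boundary of length $2$, so $|\mu|>|\sigma|$ and $D$ alone is a thick configuration of the first type. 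So assume $s\ge3$, hence $r\ge4$.

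The main obstacle is the subcase $s\ge3$: here $D$ by itself need not be a thick configuration in the sense of Definition~\ref{def:thickConf}(1) (that definition, judging from Figure~\ref{fig:thickConf}(a), wants a region with exactly two inner edges), so I must exhibit either a single region with two inner edges and more outer edges, or the two-region pattern of Definition~\ref{def:thickConf}(2). The strategy is to look at a region $E$ adjacent to $D$ across one of the $s$ inner edges of $D$ that sits ``next to'' the outer boundary — i.e. an $E$ sharing an endpoint vertex of $\mu_D$. Since all interior vertices have valence $\ge4$ and the extreme vertices of $\mu_D$ are where $\mu_D$ meets the rest of $\alpha$, such an $E$ is itself a boundary region (its boundary contains the outer edge of $\alpha$ incident to that vertex). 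One then argues that $E$ contributes at least one outer edge $e$ with $e$ followed by a length-two outer sub-path of $D$ lying along $\alpha$; if that length-two sub-path of $\partial D$ is matched on the inside by exactly two inner edges of $D$ forming $\sigma$, we are in case~(2) with $D_2=D$, $D_1=E$. The delicate point — and where I expect to spend the most effort — is showing that one can always \emph{localize} to a sub-region $D_2$ with $|\partial D_2\cap\alpha|=|\sigma|=2$: one must use the $C(4)\&T(4)$ constraints (each region $\ge4$ edges, each interior vertex valence $\ge4$) together with $s<r$ to guarantee that somewhere along $\mu_D$ two consecutive outer edges are separated on the inner side by two inner edges rather than being ``absorbed'' into a longer inner arc. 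A clean way to organize this is an extremal/minimal-counterexample choice of $D$ (take $D$ among all cut corners for $\alpha$ or $\beta$ with $s$ as small as possible, or with $r-s$ maximal), so that the region $E$ across an inner edge adjacent to $\partial_D\cap\alpha$ cannot itself be a smaller cut corner, which pins down its intersection with $\alpha$ to a single edge and forces the configuration~(2) picture. Once the two-region pattern is identified, verifying the conditions of Definition~\ref{def:thickConf}(2) — connected interior, $|\mu|=|\sigma|=2$ on $\partial D_2$, $\sigma$ all inner, and $e\mu$ a sub-path of $\alpha$ — is a routine check, completing the contrapositive and hence the theorem.
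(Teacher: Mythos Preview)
Your contrapositive setup via Theorem~\ref{thm:mapVer} is exactly the paper's approach, but the case analysis that follows contains a genuine error. The vertex $i(\mu_D)$ appearing in types T2--T4 is, by definition, the common endpoint of the boundary edges $e_{\ell-1}$ and $e_\ell$; it is therefore a \emph{boundary} vertex of $M$, not an inner one. The $T(4)$ hypothesis in a proper $C(4)\&T(4)$ map forbids only \emph{inner} vertices of valence below four, so a boundary vertex of valence three is perfectly permitted. Your dismissal of T2 on these grounds is thus incorrect, and the same mistake undercuts your stated reason for excluding T3 and T4. In fact T2 is precisely what yields thick configuration~(2): when $s=r=2$ and $i(\mu_D)$ has valence three, the unique third edge at that vertex is the first inner edge of $D$, so the region $D_1$ carrying $e_{\ell-1}$ is forced to be adjacent to $D_2:=D$ along that edge, which is exactly the picture of Definition~\ref{def:thickConf}(2).

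You also over-read Definition~\ref{def:thickConf}(1): it requires only $|\mu|>|\sigma|$, i.e.\ $r>s$, with no constraint that $|\sigma|=2$ --- do not infer extra hypotheses from a schematic figure. Hence every T1 cut corner is already a thick configuration of the first type, for any value of $s$. Your entire ``main obstacle'' subcase $s\ge 3$, together with the vague extremal/localization sketch (``localize to a sub-region $D_2$'' has no clear meaning when $D$ is a single $2$-cell), is therefore unnecessary and does not need to be made rigorous. The paper's proof is the one-line observation you were circling around: in a proper $C(4)\&T(4)$ map the only cut-corner types that arise are T1 and T2, and these are exactly the two thick configurations. (The paper asserts the exclusion of T3 and T4 without further comment; whatever the correct reason is, it is not the boundary-vertex argument you gave.)
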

\begin{proof}
Since the map is a proper $C(4)\&T(4)$ map the only possible cut corners are of type T1 and T2 which are the thick configurations defined above.
\end{proof}


\end{document}